\theoremstyle{plain}
\newtheorem{theorem}{Theorem}[section]
\newtheorem{lemma}[theorem]{Lemma}
\newtheorem{proposition}[theorem]{Proposition}
\theoremstyle{remark}
\newtheorem*{remark}{Remark}
\newtheorem*{definition}{Definition}
\theoremstyle{definition}
\newcommand{\RC}{{\mathcal R}}
\newcommand{\Z}{\mathbb{Z}}
\newcommand{\N}{\mathbb{N}}
\newcommand{\R}{\mathbb{R}}
\newcommand{\C}{\mathbb{C}}
\def\cp{\mathbb{CP}}
\def\part{\partial}
\def\sm{\setminus}
\def\Hom{\operatorname{Hom}}
\def\im{\mbox{Im}}
\def\wti{\widetilde}
\def\lra{\longrightarrow}
\def\bd{\begin{definition}}
\def\ed{\end{definition}}
\def\bt{\begin{theorem}}
\def\et{\end{theorem}}
\def\br{\begin{remark}}
\def\er{\end{remark}}
\def\bp{\begin{proposition}}
\def\ep{\end{proposition}}
\def\be{\begin{equation}}
\def\ee{\end{equation}}
\def\bn{\begin{enumerate}}
\def\en{\end{enumerate}}
\def\ba{\begin{array}}
\def\ea{\end{array}}
\def\a{\alpha}
\def\op{\operatorname}
\def\gl{\op{GL}}
\def\wti{\widetilde}
\def\G{\Gamma}
\def\id{\op{id}}
\begin{document}

\title[Novikov homology]{Twisted Novikov homology of complex hypersurface complements}
\author{Stefan Friedl}
\address{Department of Mathematics,
          University of Regensburg,
          Germany.}
\email {sfriedl@gmail.com}
\author{Laurentiu Maxim}
\address{Department of Mathematics,
          University of Wisconsin-Madison, USA.}
\email {maxim@math.wisc.edu}


\date{\today}

\subjclass[2000]{Primary 14J70, 32S20; Secondary 32S55, 55N25, 57Q45.}

\keywords{Novikov homology, Novikov-Betti numbers, Novikov torsion numbers, hypersurface complement, singularities, Alexander invariants, Milnor fibration}

\begin{abstract}
We study the twisted Novikov homology of the complement of a complex hypersurface in general position at infinity. We give a self-contained topological proof of the vanishing (except possibly in the middle degree) of the twisted Novikov homology groups associated to 
positive cohomology classes of degree one defined on the complement.
\end{abstract}

\maketitle

\section{Introduction}

Novikov homology was originally introduced for the purpose of generalizing classical Morse theory to the context of arbitrary closed one-forms, e.g., see \cite{F04} for an overview.  This theory and its variants have fascinating applications in dynamical systems, symplectic topology, geometry group theory, knot theory, etc. For example, twisted Novikov homology can detect fibering of knots. In relation to geometric group theory, the so-called BNSR-invariants of a group, which contain important information on the finiteness properties of certain subgroups, can be described in terms of vanishing results in Novikov homology (e.g., see \cite{FGS,PS10}). Other topological implications of vanishing of Novikov homology have been derived through the language of barcodes and Jordan cells (e.g., see \cite{B}).

In this note, we study the twisted Novikov homology of complements to complex hypersurfaces in general position at infinity. We give a self-contained topological proof of the vanishing (except possibly in the middle degree) of the twisted Novikov homology groups associated to 
positive cohomology classes of degree one defined on the complement. Classical Novikov homology of (essential) hyperplane arrangement complements has been studied in \cite{KP15} by Morse-theoretical methods.

\medskip

Let $M$ be a  topological space. Throughout the paper we assume that all topological spaces are connected and that they admit universal coverings. Furthermore  we make the canonical identifications $H^1(M;\R)=\Hom(H_1(M,\Z),\R)=\Hom(\pi_1(M),\R)$.
An {\it{admissible pair}} for $M$ is an epimomorphism $\psi\colon \pi_1(M)\to \Gamma$ to a free abelian group together with some $\xi \in H^1(M;\R)=\Hom(\pi_1(M),\R)$ such that $\xi\colon \pi_1(M)\to \R$ factors through $\psi$. By a slight abuse of notation we denote the unique induced homomorphism $\Gamma\to \R$ by $\xi$ as well. 

In Section~\ref{defNB}, we will associate to $(M,\psi,\xi)$, with $(\psi,\xi)$ an admissible pair together with a representation $\alpha\colon \pi_1(M)\to \gl(k,S)$ over a domain $S$, the {\it twisted Novikov-Betti numbers} $b_i^\a(M,\psi,\xi)$ and the {\it twisted Novikov torsion numbers} $q_i^\a(M,\psi,\xi)$. The classical Novikov-Betti numbers $b_i(M,\xi)$ and Novikov torsion numbers $q_i(M,\xi)$  as defined in \cite{F04} 
can be recovered by taking $\alpha$ to be the trivial one-dimensional representation $\pi_1(M)\to \gl(1,\Z)$ and by taking  $\psi=\xi\colon \pi_1(M)\to \Gamma:=\mbox{Im}(\xi)\subset \R$. 

Before we state our main theorem we recall that for a complex hypersurface $X\subset \C^n$ the first homology of the complement $M_X$ has a basis that is given by the choice of a positive meridian for each irreducible component of $X$. We say that a homomorphism $\xi\colon H_1(M_X;\R)\to \R$ is {\it positive}, if $\xi$ maps all meridians to positive real numbers.

\bt\label{mt}\label{mainthm} Let $X \subset \C^n$ be a complex hypersurface in general position at infinity, with complement $M_X$. Then for any admissible pair $(\psi\colon \pi_1(M_X)\to \Gamma,\xi \in H^1(M_X;\R))$ such that $\xi$ is positive, together with a representation $\alpha\colon \pi_1(M_X)\to \gl(k,S)$ over a domain $S$, we have
\be\label{B} b_i^\a(M_X,\psi,\xi)= \left\{ \ba{rl} 0, & \mbox{ for }i\ne n, \\ (-1)^{n}k\chi(M_X),
&\mbox{ for }i=n. \ea \right.\ee
In particular, \be\label{E}(-1)^n \cdot \chi(M_X) \geq 0.\ee
Moreover, all twisted Novikov torsion numbers of $(M_X,\psi,\xi)$ vanish, that is:
\be\label{tor} q_i^\a(M_X,\psi,\xi)=0, \  {\rm for \ all \ } i \geq 0.\ee
\et

\subsection*{Conventions.} All domains are understood to be commutative.

\subsection*{Acknowledgement.}
S. Friedl gratefully acknowledges the support provided by the SFB 1085 `Higher
Invariants' at the University of Regensburg, funded by the Deutsche
For\-schungsgemeinschaft (DFG). L. Maxim is supported by grants from NSF, NSA, and by a fellowship from the Max-Planck-Institut f\"ur Mathematik,  Bonn.

\section{Preliminaries}\label{pre}

\subsection{Novikov rings}

Let $\Gamma$ be a free abelian group and let $S$ be a domain.
\begin{enumerate}
\item We say that $p\in  S[\Gamma]$ is a {\it{monomial}} if there exists $\gamma\in \Gamma$ and a unit $s\in S$  such that $p=s\gamma$.
\item Given $\xi \in \Hom(\Gamma,\R)$ and $p=\sum_{\gamma\in \Gamma}n_\gamma \gamma\in S[\Gamma]\setminus \{0\}$  we write 
\[ m_\xi(p):=\max\{ \xi(\gamma)\,|\,n_\gamma\ne 0\}\]
and we write
\[ t_\xi(p):=\sum_{\xi(\gamma)=m_\xi(p)} n_\gamma\gamma.\]
\item 
Given $\xi \in \Hom(\Gamma,\R)$ we write 
\[ T_\xi S[\Gamma]:=\{ p\in S[\Gamma]\setminus \{0\}\,|\,\mbox{$t_\xi(\gamma)$ is a monomial}\}.\]
Furthermore we write 
\[ \RC_\xi S[\Gamma]:=(T_\xi S[ \Gamma])^{-1}S[\Gamma].\]
We refer to $\RC_\xi S[\Gamma]$ as the {\it rational Novikov completion of $S[\Gamma]$ with respect to $\xi$}.
\end{enumerate}

We recall the following well-known lemma.

\begin{lemma}\label{lem:units-in-rc}
Let $\Gamma$ be a free abelian group and let $S$ be a domain.
Then $T_\xi S[\Gamma]$ consists precisely of the elements of $S[\Gamma]$ which are invertible in $\RC_\xi S[\Gamma]$.
\end{lemma}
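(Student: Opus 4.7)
The plan is to prove both containments, using throughout that since $\Gamma$ is torsion-free abelian and $S$ is a domain, the group ring $S[\Gamma]$ is itself a domain. The preliminary step that underlies both directions is to show that $t_\xi$ is multiplicative on nonzero elements, i.e.\ $t_\xi(pq) = t_\xi(p)\, t_\xi(q)$. Only products of terms of $t_\xi(p)$ with terms of $t_\xi(q)$ can contribute to terms of $pq$ of $\xi$-value $m_\xi(p)+m_\xi(q)$, and because $S[\Gamma]$ is a domain their sum $t_\xi(p)\, t_\xi(q)$ is nonzero; this simultaneously yields $m_\xi(pq)=m_\xi(p)+m_\xi(q)$ and the claimed identity. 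In particular, a product of two monomials is a monomial, so $T_\xi S[\Gamma]$ is multiplicatively closed, and every element of it becomes invertible in $\RC_\xi S[\Gamma]$ directly from the definition of the localization.

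For the converse, assume $p\in S[\Gamma]$ is invertible in $\RC_\xi S[\Gamma]$; then $pa=b$ for some $a\in S[\Gamma]$ and $b\in T_\xi S[\Gamma]$, and applying the multiplicativity above gives $t_\xi(p)\, t_\xi(a) = t_\xi(b)$, a monomial. The problem thus reduces to the algebraic statement: if nonzero $u,v\in S[\Gamma]$ satisfy that $uv$ is a monomial, then $u$ is a monomial. I would fix a translation-invariant total order on $\Gamma$, which exists because $\Gamma$ is torsion-free abelian (embed in a $\Q$-vector space and order lexicographically). Since $S$ is a domain, the maximum and the minimum elements of the support of a product are the products of the corresponding extremes of the two factors, with no cancellation possible. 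If $uv$ is a monomial its support has a single element, so max and min coincide; a quick comparison then forces the same coincidence for $u$ and for $v$ separately, whence $u$ has support of size one. Its coefficient is automatically a unit of $S$, because in a domain a divisor of a unit is a unit.

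The main obstacle is precisely the non-cancellation of extremal terms: both the multiplicativity of $t_\xi$ and the uniqueness of monomial factorizations in $S[\Gamma]$ rest on it, and both in turn reduce to the fact that $S[\Gamma]$ is a domain together with the existence of a group-compatible total order on $\Gamma$. Once these are in place, everything else is routine manipulation of the localization.
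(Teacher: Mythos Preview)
Your proof is correct and follows essentially the same route as the paper: both arguments hinge on the multiplicativity of $t_\xi$ (which reduces the question to showing that a divisor of a monomial in $S[\Gamma]$ is itself a monomial), and the paper simply asserts this last fact while you spell it out via a translation-invariant total order on $\Gamma$. The paper also omits the easy direction (elements of $T_\xi S[\Gamma]$ are invertible in the localization by construction), which you include.
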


\begin{proof}
Let $p\in S[\Gamma]$ be an element that is invertible in $\RC_\xi S[\Gamma]$. This means that there there exists a $r\in T_\xi \Gamma$ and a $q\in S[\Gamma]$ such that $p\cdot r^{-1}q=1$. In particular $p\cdot q=r$. Since $S$ is a domain  it follows  that $t_\xi \colon S[\Gamma]\setminus \{0\}\to S[\Gamma]\setminus \{0\}$ is multiplicative. We now see that
\[  t_\xi(p)\cdot t_\xi(q)\,=\,t_\xi(r)\,=\,1.\]
It follows that $t_\xi(p)$ is a unit in $S[\Gamma]$. Since $S$ is a domain  we deduce that $t_\xi(p)$ is a monomial, i.e.\ $p\in T_\xi S[\Gamma]$. 
\end{proof}

%
%

\subsection{Novikov-Betti and torsion numbers}\label{defNB}
Let $M$ be a topological space. We write $\pi=\pi_1(M)$. Let $(\psi\colon \pi\to \Gamma,\xi\in H^1(M;\R))$ be an admissible pair
and let
$\alpha\colon \pi_1(M)\to \gl(k,S)$ be a representation over a domain $S$.
 We denote by $$\wti{M} \lra M$$  the universal covering of $M$.
The canonical left $\pi$-action on $\wti{M}$ turns the cellular groups $C_i(\wti{M};\Z)$ into left-modules over the group ring $\Z[\pi]$. 

The representation $\alpha$ turns $S^k$ into a right $\Z[\pi]$-module.
We write
\[ C_*^\alpha(M;S^k):=S^k\otimes_{\Z[\pi]}C_*(\wti{M})\]
and we write
\[ H_i^\alpha(M;S^k):=H_i\big(C_*^\alpha(M;S^k)\big).\]
The  homomorphism $\psi$ turns $\Z[\G]$, and thus also $\RC_\xi \Z[\Gamma]$ into  a right $\Z[\pi]$-module. Thus we can view
$S[\Gamma]^k$ and $\RC_\xi S [\Gamma]^k=\RC_\xi \Z[\Gamma]\otimes_\Z S^k$ as right $\Z[\pi]$-module.
We write
\[ C_*^\alpha(M;\RC_\xi S[\Gamma]^k):=\RC_\xi S[\Gamma]^k\otimes_{\Z[\pi]}C_*(\wti{M})\]
and 
\[ H_i^\alpha(M;\RC_\xi S[\Gamma]^k):=H_i\big(C_*^\alpha(M;\RC_\xi S[\Gamma]^k)\big).\]

\bd 
Let $M$ be a topological space. We write $\pi=\pi_1(M)$. Let $(\psi\colon \pi\to \Gamma,\xi\in H^1(M;\R))$ be an admissible pair
and let
$\alpha\colon \pi_1(M)\to \gl(k,S)$ be a representation over a domain $S$.
The {\it $i$-th twisted Novikov-Betti number} is defined as
\[ b_i^\alpha(M,\psi,\xi):=\mbox{rank of the $\RC_\xi S[\Gamma]$-module $H_i^\alpha(M;\RC_\xi S[\Gamma]^k).$ }\]
The {\it $i$-th twisted Novikov torsion number} is defined as
\[ q_i(M,\psi,\xi):=\ba{c}\mbox{  minimal number of generators of the torsion}\\
\mbox{submodule of the  $\RC_\xi S[\Gamma]$-module $H_i^\alpha(M;\RC_\xi S[\Gamma]^k)$. }\ea\]
\ed

In the following proposition we list a few basic facts about twisted Novikov-Betti and torsion numbers. The proofs are verbatim the same as the proofs of the corresponding statements for untwisted Novikov-Betti and Novikov torsion numbers that are studied in \cite[Chapter~1]{F04}:

\bp\label{pr} The twisted Novikov-Betti and the twisted Novikov torsion numbers satisfy the following properties:
\bn
\item[(a)] The following equality holds: $$k\chi(M)=\sum_{i \geq 0} (-1)^i \cdot b_i^\alpha(M,\psi,\xi).$$
\item[(b)] For  any $\lambda \in \R_{>0}$ and any $i$ we have  
\[ H_i^\alpha(M;\RC_{\lambda\xi} S[\Gamma]^k)\,=\, H_i^\alpha(M;\RC_\xi S[\Gamma]^k).\]
In particular, we have 
\[ b_i^\alpha(M,\psi,\xi)=b_i^\alpha(M,\psi,\lambda\xi)\mbox { and } q_i^\alpha(M,\psi,\xi)=q_i^\alpha(M,\psi,\lambda\xi).\]
\en
\ep


\subsection{Topology of complex hypersurface complements}\label{top}

Let $X$ be a hypersurface in $\C^{n}$ ($n \geq 2$), with underlying reduced hypersurface $X_{red}$ defined by the (square-free) equation
$f=f_1\cdots f_s =0$, where $f_i$ are the irreducible factors of the polynomial 
$f$.  Let $X_i=\{f_i=0\}$ denote the irreducible components of
$X_{red}$. Embed $\C^{n}$ in $\cp^{n}$ by adding the hyperplane at infinity, $H$,
and let $\overline{X}$ be the projective completion of $X$ in $\cp^{n}$. Let $M_X$ denote the affine hypersurface complement 
$$M_X:=\C^n \setminus X= \C^n \setminus X_{red}.$$
Alternatively, $M_X$ can be regarded as the complement in $\cp^n$ of the
divisor $\overline{X} \cup H$. Then it is well-known that $H_1 (M_X;\Z)$ is
a free abelian group,  generated by the meridian loops $\gamma_i$ about the
non-singular part of each irreducible component ${X}_i$, for
$i=1,\cdots, s$ (e.g., see \cite{Di92}, (4.1.3), (4.1.4)). 
Furthermore, since $M_X$ is an $n$-dimensional affine variety, it has the
homotopy type of a finite CW-complex of real dimension $n$ (e.g., see  \cite{Di92}, (1.6.7), (1.6.8)). 

Let $S^{\infty}$ be a $(2n-1)$-sphere in $\C^{n}$ of a sufficiently large radius
(that is, the boundary of a small tubular neighborhood in $\cp^{n}$
of the hyperplane $H$ at infinity). Denote by
$$X^{\infty}=S^{\infty} \cap X$$ the {\it link of $X$ at infinity}, and by 
 $$M_X^{\infty}=S^{\infty} \sm X^{\infty}$$ its complement in $S^{\infty}$. Note that $M_X^{\infty}$ is homotopy equivalent to
$T(H) \setminus \overline{X} \cup H$, where $T(H)$ is the tubular neighborhood of $H$ in $\cp^{n}$ for which $S^{\infty}$ is the boundary.  Then a classical argument based on the Lefschetz hyperplane theorem yields that the homomorphism $$\pi_i(M_X^{\infty}) \lra \pi_i(M_X)$$ induced by inclusion is an isomorphism for $i < n-1$ and it is surjective for $i=n-1$; see \cite[Section 4.1]{DL06} for more details.  It follows that \be\label{eq}\pi_i(M_X,M_X^{\infty})=0 \ , \  {\rm for \ all} \  \ i \leq n-1,\ee
hence $M_X$ has the homotopy type of a complex obtained from $M_X^{\infty}$ by adding cells of dimension $\geq n$.

If, moreover, $X$ is {\it in general position at infinity}, that is, the reduced underlying variety of $\overline{X}$ is transversal to $H$ in the stratified sense, then $M_X^{\infty}$ is a circle fibration
over $H \setminus  \overline{X} \cap H$,  which is homotopy equivalent to the complement in $\C^{n}$ to the
affine cone over the projective hypersurface $\overline{X} \cap H \subset H=\mathbb{CP}^{n-1}$ (for a similar argument see \cite[Section~4.1]{DL06}). Hence, by the Milnor fibration theorem (e.g., see \cite[(3.1.9),(3.1.11)]{Di92}), $M_X^{\infty}$ fibers over $\C^* \simeq S^1$, with fiber $F$ homotopy equivalent to a finite $(n-1)$-dimensional CW-complex.


\section{Novikov homology of complex hypersurface complements}
\label{section:complex-hypersurface-complements}

In this section we will give the proof of Theorem~\ref{mt}.

\subsection{Preliminary lemmas}
We start out with the following lemma.

\begin{lemma}\label{lem:higher-homology}
Let $X \subset \C^n$ be a hypersurface  with complement $M_X$. Let  $(\psi\colon \pi_1(M_X)\to \Gamma,\xi \in H^1(M_X;\R))$ be an admissible pair and let  $\alpha\colon \pi_1(M_X)\to \gl(k,S)$ be a representation over a domain $S$. Then the following hold:
\begin{enumerate}
\item We have
 $H_i(M_X;\RC_\xi S[\Gamma]^k)=0$ for $i>n$. In particular, we have the vanishing $b_i^\a(M_X,\psi,\xi)=0$ and $q_i^\a(M_X,\psi,\xi)=0$ for $i>n$.
\item We have $q_n^\alpha(M_X,\psi,\xi)=0$.
\end{enumerate}
\end{lemma}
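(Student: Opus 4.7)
The plan is to reduce both statements to the single topological fact already recalled in Section~\ref{top}: $M_X$ has the homotopy type of a finite CW-complex of real dimension $n$. Fix a CW-structure on $M_X$ with cells only in degrees $0,1,\dots,n$ and pull it back to the universal covering $\wti{M_X}$. Then $C_i(\wti{M_X};\Z)$ is a finitely generated free left $\Z[\pi]$-module and vanishes for $i>n$. Tensoring over $\Z[\pi]$ with $\RC_\xi S[\Gamma]^k$, which is a flat right $\Z[\pi]$-module in the relevant sense (we are just using that it is a right $\Z[\pi]$-module), gives a chain complex
\[
C_*^\alpha(M_X;\RC_\xi S[\Gamma]^k) \,=\, \RC_\xi S[\Gamma]^k \otimes_{\Z[\pi]} C_*(\wti{M_X};\Z)
\]
that vanishes in degrees $>n$. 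Part~(1) is then immediate: $H_i^\alpha(M_X;\RC_\xi S[\Gamma]^k)=0$ for $i>n$, and in particular both $b_i^\alpha$ and $q_i^\alpha$ vanish in those degrees.

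For part~(2) the key observation is that $\RC_\xi S[\Gamma]$ is a \emph{domain}. Indeed, since $S$ is a domain and $\Gamma$ is torsion-free abelian, the group ring $S[\Gamma]$ is a domain; by definition $\RC_\xi S[\Gamma]=(T_\xi S[\Gamma])^{-1}S[\Gamma]$ is a localization, hence still a domain. Because the chain complex $C_*^\alpha(M_X;\RC_\xi S[\Gamma]^k)$ vanishes above degree $n$, the top homology is a kernel,
\[
H_n^\alpha(M_X;\RC_\xi S[\Gamma]^k) \,=\, \ker\bigl(\partial_n\colon C_n^\alpha \to C_{n-1}^\alpha\bigr),
\]
and thus sits as a submodule of the free $\RC_\xi S[\Gamma]$-module $C_n^\alpha$. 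Submodules of free modules over a domain are torsion-free, so the torsion submodule of $H_n^\alpha$ is zero, i.e.\ $q_n^\alpha(M_X,\psi,\xi)=0$.

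In short, everything is driven by the dimension bound on $M_X$; no Novikov-specific input beyond Lemma~\ref{lem:units-in-rc} (to know that the localization makes sense) and the fact that localizations of domains are domains is needed. There is no serious obstacle: the only point one has to be slightly careful about is verifying that $\RC_\xi S[\Gamma]$ is a domain, which follows formally from the hypothesis that $S$ is a commutative domain and $\Gamma$ is free abelian.
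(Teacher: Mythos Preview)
Your argument is correct and follows essentially the same line as the paper's proof: both deduce (1) from the vanishing of cells above dimension $n$ and (2) from the fact that $H_n$ is a submodule of the free module $C_n^\alpha$ over the domain $\RC_\xi S[\Gamma]$. Your explicit justification that $\RC_\xi S[\Gamma]$ is a domain (as a localization of the domain $S[\Gamma]$) is a helpful detail the paper leaves implicit; the aside about flatness and the reference to Lemma~\ref{lem:units-in-rc} are unnecessary but harmless.
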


\begin{proof}
The first statement follows immediately from the fact that $M_X$ has the homotopy type of a finite CW-complex $M'$ of real dimension $n$.
This fact also implies that 
$ H_n^\a(M_X;\RC_\xi S[\Gamma]^k)=H_n^\a(M';\RC_\xi S[\Gamma]^k)$ is a submodule of the free $\RC_\xi S[\Gamma]$-module $C_n^\alpha(M';\RC_\xi S[\Gamma]^k)$.
In particular $ H_n^\a(M_X;\RC_\xi S[\Gamma]^k)$ has no $\RC_\xi S[\Gamma]$-torsion, which in turn implies that
 $q_n^\a(M_X,\psi,\xi)=0$.
\end{proof}

In the following we adopt the convention that if $\varphi\colon \pi_1(M)\to G$ is a homomorphism and $N\subset M$ is a connected subspace, then, by a slight abuse of notation, we denote the induced homomorphism $\pi_1(N)\to \pi_1(M)\xrightarrow{\varphi} G$ by $\varphi$ as well.

\bp\label{bound} 
 Let $X \subset \C^n$ be a  hypersurface  with complement $M_X$, and fix an admissible pair  $(\psi\colon \pi_1(M_X)\to \Gamma,\xi \in H^1(M_X;\R))$. Let  $\alpha\colon \pi_1(M_X)\to \gl(k,S)$ be a representation over a domain $S$. Then for any 
 $i < n-1$, we have $\RC_\xi S[\Gamma]$-isomorphisms
  \[
 H_i^\a(M_X^\infty;\RC_\xi S[\Gamma]^k)\xrightarrow{\cong} H_i(M_X;\RC_\xi S[\Gamma]^k)\]
and we  have an epimorphism of $\RC_\xi S[\Gamma]$-modules
  \[
  H_{n-1}(M_X^\infty;\RC_\xi S[\Gamma]^k)\to H_{n-1}(M_X;\RC_\xi S[\Gamma]^k).\]
\ep

\begin{proof}
This is an immediate consequence of the fact,  mentioned in Section~\ref{top}, that the complement  $M_X$ is obtained (up to homotopy) from $M_X^{\infty}$ by adding cells of dimension $\geq n$ and the fact that twisted homology groups are homotopy invariants.
\end{proof}

\begin{definition}
Given a manifold $M$ and a class $\xi \in H^1(M;\Z)=\Hom(\pi_1(M),\Z)$ we say that $\xi$ is {\it fibered} if there exists a bundle map $p\colon M\to S^1$ such that $p_*=\xi\colon \pi_1(M)\to \Z$.\end{definition}

\begin{proposition}\label{prop:fibered}
 Let $M$ be a manifold. Let  $(\psi\colon \pi_1(M)\to \Gamma,\xi \in H^1(M;\Z))$ be an admissible pair and let  $\alpha\colon \pi_1(M)\to \gl(k,S)$ be a representation over a domain $S$. If $\xi$ is fibered, then for any $i$ we have
 $H_i^\a(M;\RC_\xi S[\Gamma]^k)=0$.
\end{proposition}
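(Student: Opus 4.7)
My plan is to exploit the fibre-bundle structure. Since $\xi$ is fibered, we have a bundle map $p \colon M \to S^1$ realising $\xi$, so $M$ is homeomorphic to the mapping torus $T_\phi$ of a self-homeomorphism $\phi \colon F \to F$ of the fibre. Choosing compatible cellular structures on $F$ and on $T_\phi$ (with $\phi$ cellular), the cellular chain complex of the universal cover admits the classical mapping-torus decomposition as an iterated mapping cone. Tensoring with $\RC_\xi S[\G]^k$ over $\Z[\pi]$ yields a quasi-isomorphism
\[ C_*^\a(M;\RC_\xi S[\G]^k) \;\simeq\; \op{Cone}\!\bigl(D_* \xrightarrow{\tau} D_*\bigr), \]
where $D_* := \RC_\xi S[\G]^k \otimes_{\Z[\pi_1(F)]} C_*(\wti F)$ is a bounded complex of finitely generated free $\RC_\xi S[\G]$-modules and $\tau$ is, up to signs and a choice of convention, of the form ``identity minus a twist involving $t$ and $\phi_\sharp$'', with $t \in \pi_1(M)$ a chosen lift of the generator of $\pi_1(S^1)$ and $\phi_\sharp$ induced by an equivariant lift of $\phi$ to $\wti F$.

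By the long exact sequence of the mapping cone, it suffices to show that $\tau$ induces isomorphisms on homology. I plan to show, more strongly, that $\tau$ is invertible as an $\RC_\xi S[\G]$-linear endomorphism in each degree, so that $\op{Cone}(\tau)$ is contractible. Set $g := \psi(t) \in \G$; after possibly replacing $t$ with $t^{-1}$ and rescaling via Proposition \ref{pr}(b), I may assume $\xi(g) = 1$. The key observation is that the image of $\pi_1(F) \hookrightarrow \pi_1(M)$ lies in $\ker(\xi\circ\psi)$, because $F$ is the fibre of $p$, whence $\psi(\pi_1(F)) \subset \ker(\xi) \subset \G$. Consequently, in the basis of $D_*$ coming from the cells of $F$, each $k\times k$ block entry of the matrix of $\phi_\sharp$ is an $S$-linear combination of elements of $\psi(\pi_1(F))$ and therefore has $\xi$-value $\leq 0$, whereas every non-zero entry of the matrix of the $t$-action has $\xi$-value exactly $1$.

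Since $\phi_\sharp$ is literally invertible (as $\phi$ is a cellular homeomorphism), I can factor $\tau$ as an invertible ``leading-in-$\xi$'' piece times a matrix of the form $\op{id}-N$ in which every entry of $N$ has $\xi$-value $\leq -1$. The determinant of $\op{id}-N$ is then $1 + (\text{terms of strictly negative } \xi\text{-value})$, whose leading term $t_\xi$ is the monomial $1$; by Lemma \ref{lem:units-in-rc} this determinant is a unit in $\RC_\xi S[\G]$. Cramer's rule now renders $\op{id}-N$, and hence $\tau$, invertible in each degree. Since the cone of an isomorphism is contractible, we conclude $H_i^\a(M;\RC_\xi S[\G]^k)=0$ for all $i$.

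The main technical obstacle lies in the first paragraph: one must choose the CW structure on $T_\phi$ and the equivariant lifts carefully enough that, in a suitable basis of $D_*$, the block entries of $\phi_\sharp$ literally lie in the image of $\Z[\pi_1(F)]$ (rather than merely being conjugate to such). That control is precisely what makes the $\xi$-value estimate go through; once it is in place, the rest reduces to the brief Novikov-ring determinant calculation above.
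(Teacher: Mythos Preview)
Your approach is essentially the paper's: identify $M$ with a mapping torus and show that the map $\id - t\varphi_*$ is invertible over $\RC_\xi S[\Gamma]$, so that the cone (equivalently, the Mayer--Vietoris sequence) forces the twisted homology to vanish. The only difference is that the paper runs the argument at the \emph{homology} level via Mayer--Vietoris---where $\varphi_*$ is automatically invertible because $\varphi$ is a homotopy equivalence, so the cellular-structure obstacle you flag never arises---while you work at the chain level and spell out the Novikov-ring determinant computation that the paper merely asserts.
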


This  proposition is well-known to the experts, it can be proved along the lines of \cite[Theorem~4.2]{GM05} or alternatively \cite{Ch03,GKM05,FK06,Fr14}. Since we could not find a result in the literature which gives precisely the statement desired we  sketch a proof.

\begin{proof}
In the following, given a manifold $X$ and a map $\varphi\colon X\to X$ we denote by $T(X,\varphi)=(X\times [0,1])/(x,0)\sim(\varphi(x),1)$ the corresponding mapping torus. We refer to the induced map $\pi_1(T(X,\varphi)) \to \pi_1([0,1]/0\sim 1)=\Z$ as the canonical homomorphism. 
We can identify the manifold $M$ with a mapping torus $T:=T(X,\varphi)$ such that $\xi\in \Hom(\pi_1(M),\Z)=\Hom(T,\Z)$ agrees with the canonical homomorphism. 
Following \cite[Section~3]{FK06}  there exists a Meyer--Vietoris sequence
\[ \ba{l}\dots \to H_i(X\times [0,1];S[\Gamma]^k)\otimes_{S[\Gamma]} \RC_\xi S[\Gamma] 
\overset{ \id-t \varphi_*}{\longrightarrow}
  H_i(X\times [0,1];S[\Gamma]^k)\otimes_{S[\Gamma]} \RC_\xi S[\Gamma] \\
  \hspace{2cm} \to  H_i(M;\RC_\xi S[\Gamma]^k)\to \dots\ea \]
where $t$ is an element with $\xi(t)=1$. All the maps 
$ \id-t \varphi_*$ are invertible over $\RC_\xi S[\Gamma]$.
It follows that the the homology groups $H_i(M;\RC_\xi S[\Gamma]^k)$ vanish.
\end{proof}

\subsection{Novikov homology for positive integral cohomology classes}\label{ad}
\bd 
Let $X \subset \C^n$ be a hypersurface  with complement $M_X$. A cohomology class $\xi \in H^1(M_X;\R)$ is called {\it positive} if the corresponding group homomorphism $\xi \colon \pi_1(M_X) \to \R$ takes strictly positive values on each positively oriented meridian generator $\gamma_i$ about an irreducible component of $X_{red}$.\ed

The following theorem takes care of Theorem~\ref{mainthm} for {\it integral} cohomology classes.

\bt\label{m}
 Let $X \subset \C^n$ be a hypersurface  with complement $M_X$. 
We assume that $X$  is in general position at infinity.
 Let  $(\psi\colon \pi_1(M_X)\to \Gamma,\xi \in H^1(M_X;\Z))$ be an admissible pair and let  $\alpha\colon \pi_1(M_X)\to \gl(k,S)$ be a representation over a domain $S$.  
 If $\xi$ is positive, then 
\be\label{B} b_i^\a(M_X,\psi,\xi)= \left\{ \ba{rl} 0, & \mbox{ for }i\ne n, \\ (-1)^{n}k\cdot \chi(M_X),
&\mbox{ for }i=n. \ea \right.\ee
In particular,  \be\label{E}(-1)^n \cdot \chi(M_X) \geq 0.\ee
Moreover
\be\label{tor} q_i^\a(M_X,\psi,\xi)=0, \  {\rm for \ all \ } i \geq 0.\ee
\et

\begin{proof} 
Since $M_X$ has the homotopy type of a finite CW complex, the homology groups 
$H_i^\a(M_X;\RC_\xi S[\Gamma]^k)$ are finitely generated $\RC_\xi S[\Gamma]$-modules.

Let $f$ be a square-free polynomial  defining $X_{red}$, the reduced hypersurface underlying $X$. We denote the factors of $f$ by $f_1,\dots,f_s$.
Let $\xi \in H^1(M_X;\Z)$ be a positive integral cohomology class, with $(n_1,\cdots,n_s) \in \N^s$ the vector of values of $\xi\colon \pi_1(M_X) \to \Z$ on the positive meridians $\gamma_i$, $i=1,\cdots,s$, about the irreducible components of $X_{red}$ corresponding to the factors $f_1,\dots,f_s$. We consider the polynomial $g={f_1}^{n_1}\cdots {f_s}^{n_s}$ on $\C^n$. Clearly, the underlying reduced hypersurface $\{g=0\}_{red}$ coincides with $X_{red}$ and, moreover, the homomorphism $g_*\colon \pi_1(M_X) \to \Z$ induced by $g$ coincides with $\xi$ (cf. \cite[p.76-77]{Di92}). By Section \ref{top}, 
the element $\xi=g_*\in H^1(M_X^\infty;\Z)=\Hom(\pi_1(M_X^\infty),\Z)$ is fibered. 
It follows from Proposition~\ref{prop:fibered} that  $H_i^\a(M_X^\infty;\RC_\xi S[\Gamma]^k)=0$ for  all $i$. 

The theorem now follows from the combination of Proposition~\ref{pr} (a), 
Lemma~\ref{lem:higher-homology} and Proposition~\ref{bound}.
\end{proof}

\br The statement about the  vanishing of the classical Novikov-Betti numbers in Theorem~\ref{m} 
has also been obtained implicitely in \cite{Ma06}  by using Alexander modules.
Furthermore it can be also derived by using the corresponding vanishing statement for the $L^2$-Betti numbers of such complements, see \cite[Theorem~1.1]{Ma14}. Indeed, it follows from \cite[Proposition~2.4]{FLM} that we have the identification:
\be b_i(M_X;\xi)=b_i^{(2)}(M_X,\xi\colon \pi_1(M_X) \to \im(\xi))\ee
between the Novikov-Betti numbers and the $L^2$-Betti numbers corresponding to $\xi$.
However, to our knowledge, Novikov torsion numbers do not have such interpretation in terms of $L^2$-invariants.
\er

\subsection{Novikov homology for positive real cohomology classes: The proof of Theorem~\ref{mainthm}}\label{pos}

\begin{definition}\mbox{}
\begin{enumerate}
\item A {\it lattice} in an $n$-dimensional real vector space $V$ is an additive subgroup $L$ of $V$ of rank $n$ such that $L$ generates $V$ as a real vector space.
\item Let $V$ be a vector space with lattice $L$.
\bn
\item  An {\it open integral half-space} of $V$ is a subset of the form $f^{-1}(\R_{>0})$ where $f\colon V\to \R$ is a homomorphism that takes integral values on $L$. 
\item  A {\it closed integral half-space} of $V$ is a subset of the form $f^{-1}(\R_{\geq 0})$ where $f\colon V\to \R$ is a homomorphism that takes integral values on $L$. 
\item The intersection of finitely many open and closed integral half-spaces is called an {\it integral cone}.
\item A finite union of integral cones is called an {\it integral subset} of $V$.
\en
\end{enumerate}
\end{definition}

The following elementary lemma summarizes some properties of integral subsets.

\begin{lemma}\label{lem:integral-subsets}
Let $V$ be a vector space together with a lattice $L$.
\bn
\item The complement of an integral subset is again an integral subset.
\item The intersection of finitely many  integral subsets is again an integral subset.
\item The union of finitely many  integral subsets is again an integral subset.
\item Any non-empty integral subset contains at least one lattice point. 
\en
\end{lemma}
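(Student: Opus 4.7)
My plan is to establish the four assertions in the order (3), (2), (1), (4), because each of the first three is essentially a formal bookkeeping step, while (4) carries the actual geometric content.

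Statement (3) is immediate: a finite union of finite unions of integral cones is again a finite union of integral cones. For (2), I would use distributivity: if $A=\bigcup_i C_i$ and $B=\bigcup_j D_j$, then $A\cap B=\bigcup_{i,j}(C_i\cap D_j)$, and each $C_i\cap D_j$ is still a finite intersection of integral half-spaces, hence an integral cone. For (1), the key observation is that the complement of an open integral half-space $f^{-1}(\R_{>0})$ is the closed integral half-space $(-f)^{-1}(\R_{\geq 0})$, and vice versa; applying De Morgan to the definition of an integral cone then exhibits its complement as a finite union of integral half-spaces, and another application of De Morgan together with (2) handles the complement of a finite union of cones.

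The content lies in (4). By (3) it suffices to show that every non-empty integral cone $C$ contains a lattice point. The crucial structural feature I would exploit is that, because every defining functional is \emph{linear} rather than affine, $C$ is stable under multiplication by strictly positive real scalars. Write the defining inequalities as $f_i>0$ for $1\leq i\leq p$ and $g_j\geq 0$ for $1\leq j\leq q$. If $p=0$, then $0\in C\cap L$. Otherwise, fix any $v\in C$, let $J=\{j : g_j(v)=0\}$, and pass to $W:=\bigcap_{j\in J}\ker g_j$. Since each $g_j$ is $\Z$-valued on $L$, the subgroup $W\cap L$ is saturated in $L$ and has rank equal to $\dim_\R W$, so $\Q$-linear combinations of an integral basis of $W\cap L$ are dense in $W$. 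The remaining constraints $f_i>0$ and $g_j>0$ for $j\notin J$ are open conditions satisfied at $v$, so I can choose such a rational $q\in W$ close enough to $v$ to stay in $C$. Clearing denominators, $Nq\in L$ for some positive integer $N$, and positive scale invariance of $C$ gives $Nq\in L\cap C$.

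The main obstacle is exactly this last argument: lattice points easily miss a narrow integral cone pointing near an irrational direction, so one cannot simply round a chosen $v\in C$ to the nearest point of $L$. The remedy is the two-stage approximation — first producing a rational point by density inside the appropriate rational subspace $W$, then clearing denominators — and it relies essentially on the positive scale-invariance of $C$, a feature that would be lost if affine (translated) half-spaces were permitted in the definition.
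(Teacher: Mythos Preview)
Your argument is correct. The paper states this lemma as ``elementary'' and gives no proof, so there is nothing to compare against; your write-up would serve as a complete justification.

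Two minor remarks. In (4), the appeal to (3) is really just an appeal to the definition of an integral subset as a finite union of integral cones (one of which must be non-empty). The case distinction $p=0$ versus $p>0$ is also not strictly necessary, since the general argument already covers $p=0$; it is harmless, of course. The essential point---that the defining functionals are homogeneous, so the cone is stable under positive dilation, which lets you first approximate $v$ by a rational point of the rational subspace $W=\bigcap_{j\in J}\ker g_j$ and then clear denominators---is exactly right, and your observation that $W\cap L$ is a full-rank lattice in $W$ (because the $g_j$ are $\Z$-valued on $L$) is the step that makes the rational approximation available.
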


Let $\Gamma$ be a free abelian group of rank $n$. In the following we  always view $\Hom(\Gamma,\R)$ as equipped with the lattice $\Hom(\Gamma,\Z)$. 
We can now formulate the following technical proposition that will be proved in the next section.

\begin{proposition}\label{mainprop}
Let $S$ be a domain, let $\Gamma$ be a free abelian group and let $C_*$ be a chain complex of finite free $S[\Gamma]$-modules.
Then 
\[ \big\{ \xi\in \Hom(\Gamma,\R)\,\big|\, H_*(\RC_\xi S[\Gamma]\otimes_{S[\Gamma]}C_*)=0\}\]
is an integral subset of $\Hom(\Gamma,\R)$.
\end{proposition}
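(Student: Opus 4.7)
The plan is to reformulate the vanishing condition as a single ideal-membership problem in $R:=S[\G]$, and then to verify that the resulting locus in $\Hom(\G,\R)$ is a finite union of integral cones by a chamber-decomposition argument. First I would make some preliminary reductions. Since $C_*$ is bounded with each $C_i$ finite free, only finitely many elements of $\G$ and finitely many coefficients in $S$ appear in the entries of the boundary matrices; replacing $\G$ by the (saturated hull of the) subgroup generated by those elements of $\G$ and $S$ by the subring generated by those coefficients, I may assume $\G\cong\Z^r$ has finite rank and $S$ is noetherian. Then $R$ is noetherian and each $H_i(C_*)$ is a finitely generated $R$-module. Since $\RC_\xi R$ is a flat localization of $R$ at the multiplicative set $T_\xi R$ (Lemma~\ref{lem:units-in-rc}), $H_i(\RC_\xi R\otimes_R C_*)\cong \RC_\xi R\otimes_R H_i(C_*)$, and for a finitely generated $R$-module $N$ the localization $\RC_\xi R\otimes_R N$ vanishes iff $\mathrm{Ann}_R(N)\cap T_\xi R\neq\emptyset$. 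Setting $J:=\mathrm{Ann}_R\!\bigl(\bigoplus_i H_i(C_*)\bigr)$, the proposition reduces to showing that
\[
A_J:=\{\,\xi\in\Hom(\G,\R):\, J\cap T_\xi R\neq\emptyset\,\}
\]
is an integral subset of $\Hom(\G,\R)$ for an arbitrary ideal $J\subset R$.

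For a single nonzero $p\in R$ a direct inspection shows that $\{\xi:t_\xi(p)\text{ is a monomial}\}$ is already an integral subset: it is the union, over those $\gamma_0\in\mathrm{supp}(p)$ whose coefficient in $p$ lies in $S^\times$, of the open integral cones $\{\xi:\xi(\gamma_0-\gamma)>0\text{ for all }\gamma\in\mathrm{supp}(p)\setminus\{\gamma_0\}\}$ (each is the open normal cone at the vertex $\gamma_0$ of the Newton polytope of $p$). Consequently $A_J$ is \emph{a priori} an infinite union of integral subsets, as $p$ ranges over $J\setminus\{0\}$.

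The main obstacle is to show that this union may be taken finite. For this I would invoke a Gr\"obner-fan / Bieri--Groves type finiteness result. Fix finitely many generators $a_1,\ldots,a_k$ of $J$, let $E:=\bigcup_i\mathrm{supp}(a_i)\subset\G$, and consider the hyperplane arrangement in $\Hom(\G,\R)$ given by $\{\xi(\gamma-\gamma')=0\}$ for $\gamma,\gamma'\in E$. This arrangement partitions $\Hom(\G,\R)$ into finitely many relatively open integral cones (\emph{chambers}). The essential claim is that, possibly after refining the decomposition by a finite rational polyhedral fan (the Gr\"obner fan of $J$), the initial ideal $\mathrm{in}_\xi(J)$, and therefore the property of $\mathrm{in}_\xi(J)$ containing a monomial with unit coefficient in $S^\times$, is constant on each chamber. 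Granting this, $A_J$ is a union of finitely many such chambers, hence an integral subset. Establishing the polyhedrality of the Gr\"obner fan over a noetherian base $S$ (rather than over a field, where it is classical) is the technical heart of the argument.
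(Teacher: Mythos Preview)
Your approach is genuinely different from the paper's, and the paper's route is both simpler and complete. The paper never passes through annihilator ideals or Gr\"obner fans at all. Instead it invokes Turaev's combinatorial criterion (Lemma~2.5 of \cite{Tu01}): a finite free based chain complex over a commutative ring $R$ is acyclic if and only if there exists a \emph{$\tau$-chain} $\alpha$ (a choice of row/column subsets of the boundary matrices making each $A_i(\alpha)$ square) with every $\det A_i(\alpha)$ invertible in $R$. Applying this with $R=\RC_\xi S[\Gamma]$ expresses the vanishing locus as
\[
M(C_*)=\bigcup_{\alpha}\ \bigcap_{i}\ M\bigl(\det A_i(\alpha)\bigr),
\]
a \emph{finite} union (there are only finitely many $\tau$-chains) of finite intersections of sets $M(p)$ for fixed $p\in S[\Gamma]$. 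That each $M(p)$ is an integral subset is exactly the single-polynomial computation you already did; closure under finite unions and intersections finishes the proof. No noetherian hypothesis and no initial-ideal theory are needed.

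Your proposal, by contrast, has two soft spots. First, the reduction to a finitely generated subring $S'\subset S$ changes the unit group: an element $a\in S'$ may lie in $S^\times$ but not in $(S')^\times$, so $T_\xi S'[\Gamma]$ can be strictly smaller than $T_\xi S[\Gamma]\cap S'[\Gamma]$, and the vanishing locus computed over $S'$ need not agree with the one over $S$. (This can be repaired by also adjoining inverses of the finitely many coefficients that happen to be units in $S$, but it needs to be said.) Second, and more seriously, the ``technical heart'' you identify --- that the condition $\mathrm{in}_\xi(J)$ contains a monomial with unit $S$-coefficient is constant on the cones of a finite rational fan --- is left unproved. Over a field this is Gr\"obner-fan finiteness, but over a general noetherian domain the unit/non-unit dichotomy in $S$ is not a Zariski-type condition and does not slot into standard Gr\"obner theory; you would have to develop this from scratch. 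The $\tau$-chain argument sidesteps the whole issue by producing the finite family of polynomials directly from the boundary matrices.
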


The statement of Proposition~\ref{mainprop} is closely related to work of Pajitnov, see e.g.\ \cite[Theorem~2.2]{Pa90}\cite[Corollary~2.7]{Pa07}. But to the best of our knowledge the statement of Proposition~\ref{mainprop} cannot be found in the literature. More precisely, all results that we could found that have similar statements are dealing only with $\xi$'s in $\Hom(\Gamma,\R)$ that are monomorphisms.

Assuming Proposition~\ref{mainprop} we are now in a position to complete the proof of Theorem~\ref{mt}.

\begin{proof}[Proof of Theorem~\ref{mt}]
 Let $X \subset \C^n$ be a complex hypersurface in general position at infinity, with complement $M_X$. Let $\psi\colon \pi_1(M_X)\to \Gamma$ be an epimorphism onto a free abelian group $\Gamma$ and let 
$\alpha\colon \pi_1(M_X)\to \gl(k,S)$ be a representation over a domain $S$. 
Using the notations from Section \ref{top}, let $X^{\infty}$ be the link at infinity of $X$, with complement $M_X^{\infty}$. As per our convention, we also denote by $\psi$ and $\alpha$ the induced epimorphism $\pi_1(M_X^{\infty}) \to \Gamma$ and, respectively, the representation $\pi_1(M^{\infty}_X)\to \gl(k,S)$. Clearly, an admissible pair $(\psi,\xi)$ for $M_X$ induces an admissible pair for $M^{\infty}_X$.

 As in the proof of Theorem \ref{m}, it follows from Lemma~\ref{lem:higher-homology} and Proposition~\ref{bound} that
  it suffices to extend the vanishing $H_*^\a(M_X^\infty;\RC_\xi S[\Gamma]^k)=0$ to all positive real cohomology classes $\xi$, with $(\psi,\xi)$ admissible.

We denote by $\wti{M^{\infty}_X}$ the universal cover of $M^{\infty}_X$ and we write
\[ C_*:= S[\Gamma]^k\otimes_{\Z[\pi_1(M^{\infty}_X)]}C_*(\wti{M^{\infty}_X}).\]
In the following, given $\xi \in \Hom(\Gamma,\R)$ we denote the induced composite homomorphism $\pi_1(M^{\infty}_X)\to \pi_1(M_X) \to \Gamma\to \R$ by $\xi$ as well.
Note that for any $\xi\colon \Gamma\to \R$ we have
\[\ba{rcl} H_*\big(\RC_\xi S[\Gamma]\otimes_{S[\Gamma]}C_*\big)&=&
H_*\big(\RC_\xi S[\Gamma]\otimes_{S[\Gamma]}S[\Gamma]^k\otimes_{\Z[\pi_1(M^{\infty}_X)]}C_*(\wti{M^{\infty}_X})\big)\\[0.1cm]
&\cong & H_*\big(\RC_\xi S[\Gamma]^k\otimes_{\Z[\pi_1(M^{\infty}_X)]}C_*(\wti{M^{\infty}_X})\big) \\[0.1cm] 
&=&
H_*(M^{\infty}_X;\RC_\xi S[\Gamma]^k).\ea\]
Combining this observation with  Proposition~\ref{mainprop} and with Lemma~\ref{lem:integral-subsets} (1) we see that 
\[ V\,:=\,\big\{ \xi \in \Hom(\Gamma,\R)\,\big|\,\mbox{ there exists an $i$ with }
H_i(M^{\infty}_X;\RC_\xi S[\Gamma]^k)\ne 0\big\}\]
is an integral subset of $\Hom(\Gamma,\R)$.

Now let $\mu_1,\dots,\mu_s$ be the generators of $\Gamma$ that correspond to the meridians of the $s$ irreducible components of $X_{red}$. Recall that for an admissible pair $(\psi,\xi)$ we say $\xi \in \Hom(\Gamma,\R)$ is positive if $\xi(\mu_i)>0$ for $i=1,\dots,s$. We denote by $\Hom^+(\Gamma,\R)$ the set of all positive homomorphisms. 

Clearly $\Hom^+(\Gamma,\R)$ is an integral subset of $\Hom(\Gamma,\R)$. 
From Lemma~\ref{lem:integral-subsets} (2) we deduce that 
$\Hom^+(\Gamma,\R)\cap V$ is an integral subset. 
By Theorem~\ref{m} we know that $\Hom^+(\Gamma,\Z)\cap  V=\emptyset$. Put differently, $\Hom^+(\Gamma,\R)\cap V$ does not contain a lattice point.
It follows from Lemma~\ref{lem:integral-subsets} (4) that $\Hom^+(\Gamma,\R)\cap V=\emptyset$. But that means exactly that $H_*(M^{\infty}_X;\RC_\xi[\Gamma]^k)=0$ for all  $\xi\in \Hom^+(\Gamma,\R)$.
\end{proof}

\subsection{Proof of Proposition~\ref{mainprop}}
Before we can give the proof of Proposition~\ref{mainprop} we need to formulate two more lemmas.

\begin{definition}
Let $\Gamma$ be a free abelian group and let $S$ be a domain.
Given $p\in S[\Gamma]$ we write
\[ M(p)\,:=\,\{\xi\in \Hom(\Gamma,\R)\,|\, p\mbox{ is invertible in }\RC_\xi S[\Gamma]\}\]
and given a matrix $A$ over $S[\Gamma]$ we write
\[ M(A)\,:=\,\{\xi\in \Hom(\Gamma,\R)\,|\, A\mbox{ is invertible over }\RC_\xi S[\Gamma]\}.\]
Furthermore, given  a chain complex $C_*$ over $S[\Gamma]$ we write
\[ M(C_*)\,:=\,
\big\{ \xi\in \Hom(\Gamma,\R)\,\big|\, H_*(\RC_\xi S[\Gamma]\otimes_{S[\Gamma]}C_*)=0 \}.\]
\end{definition}

\begin{lemma}\label{lem:determinant-pcones}
Let $S$ be a domain  and let $\Gamma$ be a free abelian group.
For any $p\in S[\Gamma]$ and for any matrix $A$ over $S[\Gamma]$ the sets $M(p)$ and $M(A)$ are integral subsets of $\Hom(\Gamma,\R)$. 
\end{lemma}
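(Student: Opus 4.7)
The plan is to reduce the matrix case to the scalar case via the determinant, and then to unwind the scalar condition using Lemma~\ref{lem:units-in-rc}. Write $p=\sum_\gamma n_\gamma\gamma\in S[\Gamma]$ and let $\op{supp}(p):=\{\gamma\in\Gamma : n_\gamma\ne 0\}$, which is a finite set. By Lemma~\ref{lem:units-in-rc}, $\xi$ lies in $M(p)$ if and only if $t_\xi(p)$ is a monomial in $S[\Gamma]$. Unwinding the definition of ``monomial'' given at the start of Section~\ref{pre}, this happens precisely when there exists some $\gamma^*\in\op{supp}(p)$ such that (i) $n_{\gamma^*}$ is a unit of $S$, and (ii) $\xi(\gamma^*)>\xi(\gamma)$ for every other $\gamma\in\op{supp}(p)$.

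For each fixed $\gamma^*\in\op{supp}(p)$ with $n_{\gamma^*}\in S^{\times}$, the locus of $\xi$ satisfying (ii) is
\[
\bigcap_{\gamma\in\op{supp}(p)\setminus\{\gamma^*\}}\bigl\{\xi\in\Hom(\Gamma,\R) : \xi(\gamma^*-\gamma)>0\bigr\}.
\]
Each factor is an open integral half-space, since evaluation at $\gamma^*-\gamma\in\Gamma$ defines a linear functional $\Hom(\Gamma,\R)\to\R$ that is integer-valued on the lattice $\Hom(\Gamma,\Z)$; the finite intersection is therefore an integral cone. Since $\op{supp}(p)$ is finite, taking the union over all eligible choices of $\gamma^*$ exhibits $M(p)$ as a finite union of integral cones, which is by definition an integral subset of $\Hom(\Gamma,\R)$.

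For the matrix statement, the ring $\RC_\xi S[\Gamma]$ is commutative (the paper's global convention), so a square matrix $A$ over $S[\Gamma]$ is invertible over $\RC_\xi S[\Gamma]$ if and only if $\det(A)\in S[\Gamma]$ is invertible there. Thus $M(A)=M(\det A)$, which is an integral subset by the scalar case just established. (A non-square $A$ has $M(A)=\emptyset$, which is still trivially integral.)

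The only real subtlety is keeping track of the unit-coefficient condition hidden in the definition of ``monomial''; once that is made explicit, condition~(ii) translates literally into a system of strict integer linear inequalities on $\Hom(\Gamma,\R)$, and the cone-theoretic vocabulary introduced just above the lemma supplies the conclusion mechanically. I do not anticipate any genuine obstacle.
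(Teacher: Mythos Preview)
Your proof is correct and follows essentially the same route as the paper: both invoke Lemma~\ref{lem:units-in-rc} to rewrite $M(p)$ as the set of $\xi$ for which $t_\xi(p)$ is a monomial, then decompose this as a finite union (indexed by support elements with unit coefficient) of open integral cones cut out by the strict inequalities $\xi(\gamma^*)>\xi(\gamma)$. You are in fact slightly more explicit than the paper in spelling out the determinant reduction for the matrix case and in verifying that each inequality defines an open integral half-space.
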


\begin{proof}
Clearly it suffices to prove the lemma for any non-zero $p\in S[\Gamma]$.
By Lemma~\ref{lem:units-in-rc} we have
\[ M(p)=\{ \xi\in \Hom(\Gamma,\R)\,|\, t_\xi(p)\mbox{ is a monomial}\}.\]
We write $p=\sum_{i=1}^n a_ig_i$ with $a_1,\dots,a_n\in S\sm \{0\}$ and where $g_1,\dots,g_n$ are pairwise disjoint elements of $\Gamma$.  Furthermore we can arrange that there exists an $m$ such that $a_1,\dots,a_m$ are units in $S$ and such that $a_{m+1},\dots,a_n$ are not units in $S$. It follows from the above description of $M(p)$ that
\[ M(p) = \bigcup\limits_{i=1}^m
\big\{ \xi\in \Hom(\Gamma,\R)\,|\, \xi(a_i)>\xi(a_j)\mbox{ for all }j\ne i\big\}.\]
This shows that $M(p)$ is the disjoint  union of finitely many open integral cones, in particular $M(p)$ is an integral subset.
\end{proof}

\begin{definition}
Let $R$ be a domain.
\bn
\item We say that a chain complex $C_*$ of free $R$-modules is {\it based} if each chain module $C_i$ is equipped with a basis. 
\item Let $C_*$ be a based finite chain complex  of length $m$ of finitely generated free  $R$-modules. We denote by $A_i=(a_{jk}^i)$, $i=0,\dots,{m-1}$ the corresponding boundary matrices. (Here following the convention of \cite{Tu01}, we think of elements in $R^k$ as row vectors and we think of the matrices as multiplying on the right.)
Following \cite[p.~8]{Tu01} we define a {\it matrix chain for $C_*$} to be a collection of sets $\alpha=(\alpha_0,\dots,\alpha_m)$ where $\alpha_i\subset \{1,2,\dots,\dim C_i\}$ so that $\alpha_0=\emptyset$. We denote by $A_i(\alpha)$ the submatrix of $A_i$ formed by the entries $a_{jk}^i$ with $j\in \alpha_{i+1}$ and $k\not\in \alpha_i$. 
The matrix chain $\alpha$ is called a {\it $\tau$-chain} if $A_0(\alpha),\dots,A_{m-1}(\alpha)$ are square matrices. \en
\end{definition}

 The following lemma is precisely \cite[Lemma~2.5]{Tu01}.

\begin{lemma}\label{lem:vanishing-novikov-homology}
Let $R$ be a domain and let $C_*$ be a based finite chain complex  of finitely generated free  $R$-modules. We denote by $A_*$ the corresponding boundary matrices.
Then  $H_i(C_*)=0$  if and only if there exists a $\tau$-chain $\alpha$ such that $\det(A_i(\alpha))$ is invertible over $R$ for all $i$.
\end{lemma}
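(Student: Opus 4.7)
The plan is to prove both implications, using block-matrix decompositions to handle the easy $(\Leftarrow)$ direction and an inductive construction to handle the $(\Rightarrow)$ direction. Throughout, write $\ol\alpha_i$ for the complement $\{1,\dots,\dim C_i\}\sm\alpha_i$; given a $\tau$-chain $\alpha$, split each chain module as $C_i=R^{\alpha_i}\oplus R^{\ol\alpha_i}$ so that each boundary matrix takes the form
\[
A_i \;=\; \begin{pmatrix} P_i & Q_i \\ R_i & S_i \end{pmatrix},
\]
where the square block $Q_i=A_i(\alpha)$ has size $|\alpha_{i+1}|\times|\ol\alpha_i|$.

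For $(\Leftarrow)$, assume $\det Q_i$ is a unit in $R$ for every $i$. I define an $R$-linear graded map $s_i\colon C_i\to C_{i+1}$ that vanishes on the $R^{\alpha_i}$ summand and equals $Q_i^{-1}$ on the $R^{\ol\alpha_i}$ summand, landing in $R^{\alpha_{i+1}}\subset C_{i+1}$. A direct block calculation, using the chain-complex identity $A_{i+1}A_i=0$ to control the cross-terms arising from $P$, $R$ and $S$, shows that $\partial s+s\partial$ is an $R$-linear endomorphism of $C_*$ of the form $\id+N$ with $N$ strictly lower-triangular with respect to the $(\alpha_i,\ol\alpha_i)$-splitting, hence invertible over $R$. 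Composing $s$ with the inverse of $\id+N$ yields an honest chain contraction of $C_*$, forcing $H_*(C_*)=0$. Equivalently one performs successive elementary basis changes over $R$, pivoting on the units $\det Q_i$, to clear $P_i$, $R_i$ and $S_i$ and exhibit $C_*$ as a direct sum of two-term elementary acyclic complexes $0\to R^{\alpha_{i+1}}\xrightarrow{Q_i}R^{\ol\alpha_i}\to 0$.

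For $(\Rightarrow)$, I would argue by induction on the length $m$ of $C_*$. Acyclicity of a bounded complex of finite free modules implies chain contractibility, so $\partial_0\colon C_1\to C_0$ is a split surjection onto the free module $C_0$. Using the splitting, I select a subset $\alpha_1\subset\{1,\dots,\dim C_1\}$ of size $\dim C_0$ such that $A_0(\alpha)$ is invertible over $R$; projecting $C_1$ onto $R^{\ol\alpha_1}$ along this splitting then produces a shorter chain complex
\[
\cdots\to C_2\xrightarrow{\pi\circ\partial_1}R^{\ol\alpha_1}\to 0
\]
which is again acyclic. The inductive hypothesis applied to this reduced complex supplies $\alpha_2,\dots,\alpha_m$, and these together with $\alpha_1$ assemble into the required $\tau$-chain.

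The main obstacle, as I see it, is the extraction of the coordinate subset $\alpha_1$ with $\det A_0(\alpha)\in R^\times$ from an abstract $R$-linear splitting of $\partial_0$: over a general domain the maximal minors of an arbitrary splitting matrix need not individually be units even when the relation $\partial_0\sigma=\id$ forces them to generate the unit ideal. To handle this, one tracks the change-of-basis matrix between the original basis of $C_1$ and a basis adapted to the chosen splitting, using the based structure of $C_*$ and the splittability of $\partial_0$ to ensure that the resulting change-of-basis determinant is itself a unit; this pins down a specific $\alpha_1$ realizing the requirement, which is precisely the combinatorial bookkeeping carried out in \cite[Lemma~2.5]{Tu01}. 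Once this selection principle is in place, the inductive reduction and the block-contraction argument from the $(\Leftarrow)$ side complete the equivalence.
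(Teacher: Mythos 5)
The paper does not actually prove this lemma; it simply states that it ``is precisely [Turaev, Lemma~2.5]'' and moves on, so there is no argument in the paper for you to match.

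Your $(\Leftarrow)$ argument is sound: with respect to the block splitting $C_i = R^{\alpha_i}\oplus R^{\ol\alpha_i}$, the prospective contraction $s$ satisfies $\partial s + s\partial = \id + N$ with $N$ a strictly lower-triangular block matrix, hence invertible, and an invertible chain map that is null-homotopic forces $H_*(C_*)=0$. The trouble is with $(\Rightarrow)$, and the obstacle you flag is not merely a bookkeeping issue that Turaev's lemma disposes of --- it is fatal in the stated generality. Over a general domain $R$ the implication is \emph{false}. Take $R=\Z$ and $C_* = \bigl(0\to \Z \xrightarrow{\partial_2} \Z^2 \xrightarrow{\partial_1} \Z \to 0\bigr)$ with $\partial_1(a,b)=2a+3b$ and $\partial_2(c)=(3c,-2c)$, using the standard bases. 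One checks $\partial_1\partial_2=0$, $H_0=\Z/(2,3)=0$, $H_1=\Z(3,-2)/\Z(3,-2)=0$, $H_2=0$, so the complex is acyclic. In Turaev's row-vector convention $A_0=\left(\begin{smallmatrix}2\\3\end{smallmatrix}\right)$ and $A_1=(3,-2)$. Any $\tau$-chain has $\alpha_0=\emptyset$, $|\alpha_1|=1$, $\alpha_2=\{1\}$, and the two possibilities yield determinant pairs $(2,-2)$ or $(3,3)$ --- none of them units in $\Z$. So no $\tau$-chain works, even though the complex is acyclic. The splitting $\sigma$ of $\partial_1$ (say $\sigma(1)=(-1,1)$) makes the maximal minors of $A_0$ generate the unit ideal, exactly as you predicted, but that is strictly weaker than one of them being a unit, and over $\Z$ no change of the \emph{given} basis can fix this.

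The deeper issue is that Turaev's Lemma~2.5 is stated and proved for based chain complexes over a \emph{field}, where ``nondegenerate'' equals ``invertible'' and a surjection of vector spaces automatically has some nonzero maximal minor; the paper has silently promoted it to arbitrary domains, where it fails. So your planned appeal to Turaev cannot close the gap, because Turaev's lemma does not assert what the paper's Lemma~\ref{lem:vanishing-novikov-homology} asserts. Your $(\Leftarrow)$ direction stands and is indeed the only direction that holds over a domain; the $(\Rightarrow)$ direction needs an additional hypothesis (e.g.\ $R$ a field, or $R$ local) or else the statement must be weakened. Since the paper's proof of Proposition~\ref{mainprop} uses the equality $M(C_*)=\bigcup_\alpha \bigcap_i M(A_i(\alpha))$ and not just the inclusion $\supseteq$, this is a genuine gap in the paper itself, not only in your write-up.
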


\begin{proof}[Proof of Proposition~\ref{mainprop}]
Let $S$ be a domain, let $\Gamma$ be a free abelian group and let $C_*$ be a chain complex of finite free $S[\Gamma]$-modules of length $m$. We pick a basis for each chain module $C_i$. 
We denote by $A_i$ the corresponding boundary matrices of the chain complex.
It follows from Lemma~\ref{lem:vanishing-novikov-homology} that
\[ M(C_*)=\bigcup\limits_\alpha \big\{ \xi \in \Hom(\Gamma,\R)\,|\,\mbox{  $ \det(A_i(\alpha))$ is invertible over $\RC_\xi S[\Gamma]$ for all $i$}\big\},\]
where we take the union over all $\tau$-chains. Put differently, we have
\[  M(C_*)=\bigcup\limits_\alpha \bigcap\limits_{i=0}^{k-1} M(A_i(\alpha)).\] 
Each $M(A_i(\alpha))$ is by Lemma~\ref{lem:determinant-pcones} an integral subset of $\Hom(\Gamma,\R)$. It follows from Lemma~\ref{lem:integral-subsets} (2) and (3)  that $M(C_*)$ is also an integral subset.
\end{proof}

\end{document}